\numberwithin{equation}{section}
\newtheorem{theoreme}{Theorem}[section]
\newtheorem{proposition}{Proposition}[section]
\newtheorem{claim}[theoreme]{Claim}
\newtheorem{lemme}{Lemma}[section]
\newtheorem{corollaire}[theoreme]{Corollary}
\newtheorem{definition}[theoreme]{Definition}
\definecolor{darkblue}{rgb}{0,0,0.7} 
\newcommand{\darkblue}{\color{darkblue}} 
\newcommand{\defn}[1]{\emph{\darkblue #1}} 
\newcommand{\RR}{\ensuremath{\mathbb R}}
\newcommand{\NN}{\ensuremath{\mathbb N}}
\newcommand{\A}{\ensuremath{\mathcal A}}
\newcommand{\V}{\ensuremath{\mathcal V}}
\newcommand{\U}{\ensuremath{\mathcal U}}
\newcommand{\B}{\ensuremath{\mathcal B}}
\newcommand{\Z}{\ensuremath{\mathcal Z}}
\newcommand{\ind}{\ensuremath{\mathds 1}}
\newcommand{\xx}{\ensuremath{\mathbf{x}}}
\newcommand{\ww}{\ensuremath{\mathbf{w}}}
\newcommand{\dd}{\ensuremath{\mathbf{d}}}
\newcommand{\zz}{\ensuremath{\mathbf{z}}}
\newcommand{\de}{\delta}
\newcommand{\be}{\beta}
\newcommand{\ep}{\varepsilon}
\newcommand{\al}{\alpha}
\newcommand{\ga}{\gamma}
\newcommand{\Ga}{\Gamma}
\newcommand{\vvv}{\ensuremath{\mathbf{v}}}
\newcommand{\VV}{\ensuremath{\mathbf{V}}}
\newcommand{\uuu}{\ensuremath{\mathbf{u}}}
\newcommand{\limn}{\lim_{n\to\infty}}
\newcommand{\vs}{\vspace{-0.25cm}}
 \def\cW{{\mathcal
W}}
\title{A strategy-based proof of the existence of the value in zero-sum differential games }
\author{Juan Pablo Maldonado L\'opez and Miquel Oliu-Barton \footnote {The authors are particularly indebted with Pierre Cardaliaguet, 
Marc Quincampoix and Sylvain Sorin for their careful reading and comments on earlier drafts. This work was partially supported by the Commission of the European
Communities under the 7th Framework Programme Marie Curie Initial Training Network (FP7-PEOPLE-2010-ITN),  project SADCO, contract number 264735. }}
\date{December 10, 2012}
\begin{document}
\maketitle 

\vs \vs 

\abstract{The value of a zero-sum differential games is known to exist, under Isaacs' condition, as the unique viscosity solution of a Hamilton-Jacobi-Bellman equation.
In this note we provide a self-contained proof based on the construction of $\ep$-optimal strategies, which is inspired by the 
``extremal aiming'' method from \cite{KS87}. }
\section{Comparison of trajectories}\label{dir}
\label{prelim}
Let $U$ and $V$ be compact subsets of some euclidean space, let $\|\cdot\|$ be the euclidean norm in $\RR^n$,
and let $f:[0,1]\times \RR^n \times U \times V\to \RR^n$. Let $\Pi=\{t_0<t_1<\dots < t_N \}$ be a set of times in $[0,1]$,
and let $\|\Pi\|:=\max_{1\leq m\leq N}t_m-t_{m-1}$. For any $\mathcal{Z}\subset \RR^n$, let $D(x,\mathcal{Z}):=\inf_{z\in \mathcal{Z}} \|x-z\|$ be the usual distance to the set $\Z$.\\
\textbf{Assumption 1}:\\
\textbf{a.} $f$ is uniformly bounded, i.e. $\|f\|:=\sup_{(t,x,u,v)}\|f(t,x,u,v)\|<+\infty$.\\
\textbf{b.} There exists $c\geq 0$ such that: $\forall (u,v)\in U
\times V,\ \forall s,t \in [0,1],\ \forall x,y \in \RR^n$,
\begin{equation*}
\|f(t,x,u,v)-f(s,y,u,v)\|\leq c\big(|t-s|+\|x-y\|\big).
\end{equation*}
\paragraph{The local game:} 
For any $(t,x)\in [0,1]\times \RR^n$ and any $\xi\in \RR^n$, consider
 the one-shot game with actions sets $U$ and $V$ and
  payoff function: $$(u,v)\mapsto \langle \xi, f(t,x,u,v)\rangle.$$
Let this game be denoted by $\Ga(t,x,\xi)$, and let $H^-(t,x,\xi)$ and $H^+(t,x,\xi)$ be its maxmin and minmax respectively: 
\begin{eqnarray*}\label{ham}
H^-(t,x,\xi)&:=& \max_{u\in U}\min_{v\in V} 
\langle \xi,f(t,x,u,v)\rangle, \\ 
H^+(t,x,\xi)&:=& 
\min_{v\in V}\max_{u\in U}\langle \xi, f(t,x,u,v)\rangle.
\end{eqnarray*}
These functions satisfy $H^-\leq H^+$.
If the equality $H^+(t,x,\xi)=H^-(t,x,\xi)$ holds, the game $\Ga(t,x,\xi)$ has a value, denoted by $H(t,x,\xi)$. \\ 
\noindent \textbf{Assumption 2}: $\Ga(t,x,\xi)$ has a value for all $(t,x,\xi)\in [0,1]\times \RR^n \times \RR^n$.\\

We suppose that the Assumptions $1$ and $2$ hold in the rest of the paper. 
\subsection{A key Lemma} 
 Introduce the sets of controls:
\[
\mathcal{U}=\{\uuu:[0,1]\to U, \ \mathrm{ measurable}\},\quad \mathcal{V}=\{\vvv:[0,1]\to V,  \ \mathrm{ measurable}\}.\]
Elements of $U$ and $V$ will be identified with constant controls. For any $(t_0, z_0,\uuu,\vvv)\in [0,1]\times\RR^n\times\U\times \V$ denote by $\zz[t_0,z_0, \uuu, \vvv]$ the solution of:
\begin{equation*}
\dot{\zz}(t) = f(t,\zz(t),\uuu(t), \vvv(t)), \ \ \zz(t_0) = z_0.
\end{equation*}
\noindent
Let $(\uuu,\vvv)\in \U\times \V$ be a pair of controls, $t_0\in [0,1]$ an initial time, $(x_0,w_0)\in (\RR^n)^2$ a pair of initial positions, 
and $(u^*,v^*)$ a couple of optimal actions in $\Ga(t_0,x_0,x_0-w_0)$.
Note any pair $(u,v)\in U\times V$ is optimal in this local game if $x_0=w_0$.  
 Consider  $\xx(t):=\xx[t_0,x_0,\uuu,v^*](t)$
and $\ww(t):=\ww[t_0,w_0,u^*, \vvv](t)$. The following lemma is inspired by Lemma 2.3.1 in \cite{KS87}.
The existence of the value in the local games will be used to bound the distance between these two trajectories.


\begin{lemme}\label{fundamental}
There exists real numbers $A,B\geq 0$ such that for all $t\in [t_0,1]$:
\begin{equation*}\label{lem}
 \|\xx(t)-\ww(t)\|^2 \leq (1+(t-t_0)A)\|x_0-w_0\|^2 + B(t-t_0)^2.
\end{equation*}
\end{lemme}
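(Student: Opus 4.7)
The plan is to estimate the growth of $\phi(t):=\|\xx(t)-\ww(t)\|^2$ through its derivative. Since $\xx,\ww$ are Lipschitz with constant $\|f\|$ and $f(t,\cdot,u,v)$ is Lipschitz, $\phi$ is absolutely continuous and for a.e.\ $t\in[t_0,1]$
\begin{equation*}
\phi'(t)=2\langle \xx(t)-\ww(t),\ f(t,\xx(t),\uuu(t),v^*)-f(t,\ww(t),u^*,\vvv(t))\rangle.
\end{equation*}
I would then freeze the dynamics at $(t_0,x_0,w_0)$ by inserting $\pm f(t_0,x_0,\uuu(t),v^*)$ and $\pm f(t_0,w_0,u^*,\vvv(t))$, splitting the bracket into two drift blocks (the difference $f(t,\xx(t),\uuu(t),v^*)-f(t_0,x_0,\uuu(t),v^*)$ and its analogue for $\ww$) and a principal block $\Delta(t):=f(t_0,x_0,\uuu(t),v^*)-f(t_0,w_0,u^*,\vvv(t))$.

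The drift blocks are handled purely by Assumption 1. From $\|\xx(t)-x_0\|\leq\|f\|(t-t_0)$ and the Lipschitz continuity of $f$, each has norm at most $c(1+\|f\|)(t-t_0)$. Combining with the a priori estimate $\|\xx(t)-\ww(t)\|\leq\|x_0-w_0\|+2\|f\|(t-t_0)$ and the elementary inequality $2ab\leq a^2+b^2$, their joint contribution to $\phi'(t)$ is bounded by $\|x_0-w_0\|^2+C_1(t-t_0)^2$ for an explicit $C_1=C_1(c,\|f\|)$.

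The heart of the proof is the principal block. I would decompose $\xx(t)-\ww(t)=\xi+r(t)$ with $\xi:=x_0-w_0$ and $\|r(t)\|\leq 2\|f\|(t-t_0)$. The pairing $2\langle r(t),\Delta(t)\rangle$ is immediately bounded by $8\|f\|^2(t-t_0)$ and will be absorbed in the final $(t-t_0)^2$ term after integration. For the crucial pairing with $\xi$, the optimality of $(u^*,v^*)$ in $\Ga(t_0,x_0,\xi)$ combined with Assumption 2 yields
\begin{equation*}
\langle\xi,f(t_0,x_0,\uuu(t),v^*)\rangle\leq H(t_0,x_0,\xi)\leq\langle\xi,f(t_0,x_0,u^*,\vvv(t))\rangle.
\end{equation*}
The subtlety is that $\Delta(t)$ evaluates the second term at $w_0$, not at $x_0$. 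This discrepancy is bridged by Cauchy--Schwarz,
\begin{equation*}
|\langle\xi,f(t_0,w_0,u^*,\vvv(t))-f(t_0,x_0,u^*,\vvv(t))\rangle|\leq c\|\xi\|\,\|x_0-w_0\|=c\|x_0-w_0\|^2,
\end{equation*}
which is the algebraic key: the ``wrong position'' costs exactly $\|x_0-w_0\|^2$, a quadratic quantity that the lemma can tolerate. Hence $2\langle\xi,\Delta(t)\rangle\leq 2c\|x_0-w_0\|^2$.

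Summing all contributions gives $\phi'(t)\leq (1+2c)\|x_0-w_0\|^2+8\|f\|^2(t-t_0)+C_1(t-t_0)^2$. Integrating on $[t_0,t]$ and using $(t-t_0)^3\leq (t-t_0)^2$ produces the stated inequality with $A=1+2c$ and an explicit $B$ depending only on $c$ and $\|f\|$. The main obstacle is conceptual rather than technical: recognizing that the pairing against $\xi=x_0-w_0$ is precisely what activates Assumption 2, and that the resulting Lipschitz mismatch at the ``wrong'' base point is quadratic, not merely linear, in the initial gap $\|x_0-w_0\|$.
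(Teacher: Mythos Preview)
Your proof is correct and rests on the same conceptual core as the paper's: pair against $\xi=x_0-w_0$ so that the optimality of $(u^*,v^*)$ in $\Gamma(t_0,x_0,\xi)$ together with Assumption~2 cancels the Hamiltonian terms, and observe that the remaining Lipschitz mismatches are quadratic in $d_0$ or of order $(t-t_0)^2$.

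The execution differs in two minor ways. First, the paper works with the integral identity $\dd^2(t)=\|(x_0-w_0)+\int_{t_0}^t(\cdots)\,ds\|^2$ and expands the square, so the cross term is \emph{automatically} paired against $x_0-w_0$; this spares the decomposition $\xx(t)-\ww(t)=\xi+r(t)$ that you need in the differential formulation. Second, the paper freezes \emph{both} dynamics at the single base point $(t_0,x_0)$ rather than at $(t_0,x_0)$ and $(t_0,w_0)$ separately; the $cd_0^2$ term then arises from the estimate $\|\ww(s)-x_0\|\leq d_0+(s-t_0)\|f\|$ inside the drift, rather than from your ``wrong position'' correction. Both routes give constants of the same form; the paper obtains $A=3c+2\|f\|$ and $B=4\|f\|^2+2c(1+\|f\|)$, while yours gives $A=1+2c$ and a comparable $B$. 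Neither approach is materially simpler than the other.
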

\begin{proof} 
Let $d_0:=\|x_0-w_0\|$ and $\dd(t):=\|\xx(t)-\ww(t)\|$.
Then:  
\begin{eqnarray}\label{pr}
 \dd^2(t)&=&  \|(x_0-w_0)+\int_{t_0}^t f(s,\xx(s),\uuu(s),v^*)-f(s,\ww(s),u^*,\vvv(s))ds\|^2.
 \end{eqnarray}
The boundedness of $f$ implies that: 
\begin{equation}\label{easy} \left \|\int_{t_0}^{t}f(s,\xx(s),\uuu(s),v^*)-f(s,\ww(s),u^*,\vvv(s))ds \right \|^2\leq 4\|f\|^2(t-t_0)^2.
\end{equation}
\textbf{Claim:} For all $s\in [t_0,1]$, and for all $(u,v)\in U \times V$:
\begin{equation}\label{seasy} \langle x_0-w_0, f(s,\xx(s),u,v^*)-f(s,\ww(s),u^*,v)ds \rangle\leq 2C(s)d_0+cd_0^2,
\end{equation}
where  $C(s):=c(1+\|f\|)(s-t_0)$.\\
Let us prove this claim.  
Assumption $1$ implies $\|\xx(s)-x_0\|\leq (s-t_0)\|f\|$, and then: 
\[\|f(s,\xx(s),u,v^*)-f(t_0,x_0,u,v^*)\| \leq c\big((s-t_0)+\|f\|(s-t_0)\big)=C(s).\]
 From Cauchy-Schwartz inequality and the optimality of $v^*$ one gets:
\begin{eqnarray}\langle x_0-w_0,f(s,\xx(s),u,v^*)\rangle &\leq&
\langle x_0-w_0, f(t_0,x_0,u,v^*)\rangle
+C(s)d_0,\\ \label{aa1}
& \leq & 
H^+(t_0,x_0,x_0-w_0)+C(s)d_0.
\end{eqnarray}
Similarly, Assumption $1$ implies $\|\ww(s)-x_0\| \leq d_0+(s-t_0)\|f\|$, and then: 
\[\|f(s,\ww(s),u^*,v)-f(t_0,x_0,u^*,v)\| \leq C(s)+cd_0.\]
 Using Cauchy-Schwartz inequality, and the optimality of $u^*$:
\begin{eqnarray}\langle x_0-w_0,f(s,\ww(s),u^*,v)\rangle &\geq&
\langle x_0-w_0, f(t_0,x_0,u^*,v)\rangle
-(C(s)+cd_0)d_0, \quad\\ \label{aa2}
& \geq & 
H^-(t_0,x_0,x_0-w_0)-C(s)d_0-cd_0^2.
\end{eqnarray}
The claim follows: substract the inequalities \eqref{aa1} anc \eqref{aa2} 
and use Assumption $2$ to cancel $(H^+-H^-)(t_0,x_0,x_0-w_0)$. 
\\
In particular, it holds for $(u,v)=(\uuu(s),\vvv(s))$. Note that $\int_{t_0}^t 2C(s)ds=(t-t_0)C(t)$. Thus, integrating \eqref{seasy} over $[t_0,t]$ yields:
\begin{equation}
\label{eez}
\int_{t_0}^t \langle x_0-w_0,f(s,\xx(s),\uuu(s),v^*)-f(s,\ww(s),u^*,\vvv(s))ds\rangle\leq (t-t_0)(C(t)d_0+cd_0^2).
\end{equation}
%
Using the estimates \eqref{easy} and  \eqref{eez} in \eqref{pr} we obtain: 
\begin{equation*}\dd^2(t) \leq d_0^2 + 4\|f\|^2(t-t_0)^2 + 2(t-t_0)C(t)d_0+2c(t-t_0)d_0^2.
\end{equation*}
Finally, use the relations $d_0\leq 1+d_0^2$, $C(t)\leq c(1+\|f\|)$ and $(t-t_0)C(t)=c(1+\|f\|)(t-t_0)^2$. The result follows with $A=3c+2\|f\|$ and $B=4\|f\|^2+2c(1+\|f\|)$.
\end{proof}

\subsection{Consequences}
In this section, we give three direct consequences of Lemma \ref{fundamental}. In Section \ref{it}, we use the set of times $\Pi$
to construct two trajectories on $[t_0,t_N]$ inductively. Applying the lemma to the intervals $[t_m,t_{m+1}]$, from $m=0$ to $N-1$, we obtain a bound for the distance
between the two at time $t_N$. In particular, the distance vanishes as $\|\Pi\|$ and 
$\|x_0-w_0\|$ tend to $0$.
In Section \ref{sets}, we replace the distance between two trajectories by the distance between a trajectory and a set. 
Finally, we combine the two aspects in Section \ref{combi}; the result obtained therein is used in Section \ref{jd}
to prove the existence of the value in zero-sum differential games with terminal payoff.
\subsubsection{Induction}\label{it}
Let $(\uuu,\vvv)\in \U\times \V$ be a pair of controls. Define the trajectories $\xx$ and $\ww$ on $[t_0,t_N]$ inductively: let 
$\xx(t_0)=x_0$ and $\ww(t_0)=w_0$ and suppose that $\xx(t)$ and $\ww(t)$ are defined on $[t_0,t_m]$ for some $m=à,\dots,N-1$;
let $(u^*_m,v_m^*)\in U\times V$ be a couple of optimal actions in the local game $\Ga(t_m,\xx(t_m),\xx(t_m)-\ww(t_m))$; 
for all $t\in[t_m,t_{m+1}]$, put $\xx(t):= \xx[t_m,\xx(t_m),\uuu,v_m^*](t)$ and $\ww(t):=\ww[t_m,\ww(t_m),u_m^*,\vvv](t)$.

\begin{corollaire}\label{coro1}
$\|\xx(t_N)-\ww(t_N)\|^2 \leq e^{A}( \|x_0-w_0\|^2+B \|\Pi\|).$
\end{corollaire}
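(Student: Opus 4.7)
The plan is to apply Lemma \ref{fundamental} on each subinterval $[t_m,t_{m+1}]$ and then iterate. Write $d_m:=\|\xx(t_m)-\ww(t_m)\|$ and $\Delta_m:=t_{m+1}-t_m$. By the inductive construction, on each $[t_m,t_{m+1}]$ the two trajectories $\xx,\ww$ are precisely of the form treated by Lemma \ref{fundamental}: the initial positions at time $t_m$ are $\xx(t_m)$ and $\ww(t_m)$, the controls $\uuu,\vvv$ (restricted to the subinterval) are measurable, and $(u_m^*,v_m^*)$ is a pair of optimal actions in the local game $\Ga(t_m,\xx(t_m),\xx(t_m)-\ww(t_m))$. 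Hence Lemma \ref{fundamental} directly yields the one-step recursion
\begin{equation*}
d_{m+1}^2 \leq (1+A\Delta_m)\,d_m^2 + B\Delta_m^2, \qquad m=0,\dots,N-1.
\end{equation*}

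Next I would unroll this recursion. A straightforward induction gives
\begin{equation*}
d_N^2 \leq \Big(\prod_{k=0}^{N-1}(1+A\Delta_k)\Big) d_0^2 + \sum_{m=0}^{N-1} B\Delta_m^2 \prod_{k=m+1}^{N-1}(1+A\Delta_k).
\end{equation*}
Using $1+x\leq e^x$ and $\sum_{k=0}^{N-1}\Delta_k = t_N-t_0 \leq 1$, every product appearing above is bounded by $\exp(A\sum_k \Delta_k)\leq e^A$. Plugging this in,
\begin{equation*}
d_N^2 \leq e^A d_0^2 + e^A B \sum_{m=0}^{N-1}\Delta_m^2.
\end{equation*}
Finally, since $\Delta_m\leq \|\Pi\|$ and $\sum_m \Delta_m \leq 1$, one has $\sum_m \Delta_m^2 \leq \|\Pi\|$, which produces the claimed inequality.

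This is essentially a discrete Gr\"onwall argument, and no step is a real obstacle once Lemma \ref{fundamental} is established; the only point requiring a bit of care is to verify that at each node $t_m$ the hypotheses of the lemma truly hold for the restriction of $(\uuu,\vvv)$ to $[t_m,t_{m+1}]$ with the updated initial positions and the freshly chosen optimal actions $(u_m^*,v_m^*)$, which is immediate from the definition of $\xx,\ww$.
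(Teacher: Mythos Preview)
Your proof is correct and follows essentially the same approach as the paper: apply Lemma~\ref{fundamental} on each subinterval to obtain the one-step recursion $d_{m+1}^2\leq(1+A\Delta_m)d_m^2+B\Delta_m^2$, unroll it, and bound the products via $1+x\leq e^x$ together with $\sum_m\Delta_m\leq 1$ and $\sum_m\Delta_m^2\leq\|\Pi\|$. The paper states the unrolled inequality directly ``by induction'' without writing out the intermediate product formula, but the argument is the same.
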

\begin{proof}
For any $0\leq m \leq N$, put $d_m:=\|\xx(t_m)-\ww(t_m)\|$. 
By Lemma \ref{fundamental}, one has: 
\begin{equation*}
d_m^2\leq (1+(t_m-t_{m-1})A )d_{m-1}^2+B(t_m-t_{m-1})^2.
\end{equation*}
By induction, one obtains:
$$d^2_N\leq \exp(A\sum\nolimits_{m=1}^N t_m-t_{m-1})(d_0^2+B\sum\nolimits_{m=1}^N (t_m-t_{m-1})^2).$$
The result follows using that $\sum_{m=1}^N t_m-t_{m-1}\leq 1$ and $\sum\nolimits_{m=1}^N (t_m-t_{m-1})^2 \leq \|\Pi\|$.
\end{proof}
\subsubsection{Distance to a set}
\label{sets} 
Let  $\cW \subset [t_0,1]\times \RR^n$ be a set satisfying the following properties:
 \begin{itemize}
 \item[$\bullet \ \textbf{P1:}$] For any $t\in[t_0,1]$, $\cW(t):=\{x\in \RR^n\ | \ (t,x)\in \cW\}$ is closed and nonempty.
 \item[$\bullet \ \textbf{P2:}$] For any $(t,x)\in \cW$ and any $t'\in [t,1]$:
\begin{equation*}\label{stab}\adjustlimits \sup_{u\in U} \inf_{\vvv\in \V} D(\xx[t,x,u,\vvv](t'),\cW(t'))=0.
 \end{equation*}
\end{itemize}
The first property ensures that the projection on $\cW(t)$ is well defined for all $t\in [0,1]$. 
Equivalent formulations of the second property were introduced by Aubin \cite{aubin89victory}, although the formulation 
$\textbf{P2}$ is inspired by the notion of stable bridge in \cite{KS87}.

Let $x_0\in \RR^n$, let $w_0\in \mathrm{argmin}_{\cW(t_0)}\|x_0-w_0\|$ be some closest point to $x_0$ in $\cW(t_0)$ and let
$v^*$ be optimal in the local game $\Ga(t_0,x_0,x_0-w_0)$.
\begin{corollaire}\label{coro2}
For all $t\in [t_0,1]$ and all $\uuu\in \U$:
\begin{equation*}
 D^2(\xx[t_0,x_0,\uuu,v^*](t),\cW(t))\leq (1+(t-t_0)A)D^2(x_0,\cW(t_0)) + B(t-t_0)^2.
\end{equation*}
\end{corollaire}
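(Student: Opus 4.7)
My plan is to reduce the corollary to Lemma \ref{fundamental} by using property $\textbf{P2}$ to produce a suitable opponent control $\vvv$ whose associated trajectory starting from $w_0$ stays arbitrarily close to $\cW$. The key observation is that $\|x_0-w_0\|=D(x_0,\cW(t_0))$ by definition of $w_0$, and that $w_0\in\cW(t_0)$ since $\cW(t_0)$ is closed and nonempty by $\textbf{P1}$.

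First I would fix $t\in[t_0,1]$, $\uuu\in\U$, and pick $u^*\in U$ optimal for the maximizer in the local game $\Ga(t_0,x_0,x_0-w_0)$, so that $(u^*,v^*)$ is a couple of optimal actions in this game. Then, by property $\textbf{P2}$ applied at $(t_0,w_0)\in\cW$ with the constant control $u^*$, for every $\eta>0$ there exists $\vvv_\eta\in\V$ such that the trajectory $\ww_\eta:=\ww[t_0,w_0,u^*,\vvv_\eta]$ satisfies
\begin{equation*}
D(\ww_\eta(t),\cW(t))\leq\eta.
\end{equation*}

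Next I would apply Lemma \ref{fundamental} to the pair of controls $(\uuu,\vvv_\eta)$, with the initial positions $(x_0,w_0)$ and the common pair of optimal actions $(u^*,v^*)$ in $\Ga(t_0,x_0,x_0-w_0)$. Writing $\xx(t):=\xx[t_0,x_0,\uuu,v^*](t)$, the lemma yields
\begin{equation*}
\|\xx(t)-\ww_\eta(t)\|^2 \leq (1+(t-t_0)A)\|x_0-w_0\|^2 + B(t-t_0)^2,
\end{equation*}
and crucially the right-hand side is independent of $\eta$ and equals the quantity we wish to bound.

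Finally I would combine these bounds by the triangle inequality: choose $z_\eta\in\cW(t)$ with $\|\ww_\eta(t)-z_\eta\|\leq D(\ww_\eta(t),\cW(t))+\eta\leq 2\eta$, so that $D(\xx(t),\cW(t))\leq\|\xx(t)-z_\eta\|\leq\|\xx(t)-\ww_\eta(t)\|+2\eta$. Letting $\eta\to 0$ and squaring gives the desired estimate. The main conceptual step is recognising that $\textbf{P2}$ is precisely what is needed to construct the companion trajectory $\ww_\eta$ required by Lemma \ref{fundamental}; the rest is a straightforward triangle inequality plus a limit, with no real technical obstacle.
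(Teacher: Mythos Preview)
Your proof is correct and follows essentially the same route as the paper's: pick $u^*$ optimal in $\Gamma(t_0,x_0,x_0-w_0)$, use \textbf{P2} at $(t_0,w_0)$ to obtain $\vvv_\eta$ with $D(\ww_\eta(t),\cW(t))$ small, apply Lemma~\ref{fundamental} to bound $\|\xx(t)-\ww_\eta(t)\|$, combine via the triangle inequality, and let $\eta\to 0$. The only cosmetic difference is that you carry an extra $\eta$ when choosing $z_\eta$, whereas the paper writes the triangle inequality directly with the distance.
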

\begin{proof}
Let $\uuu\in \U$ be fixed. 
Let $u^*$ be optimal in $\Ga(t_0,x_0,x_0-w_0)$.
By \textbf{P2}, for all $\ep>0$ there exists $\vvv_\ep\in \V$ such that the point
$\ww_\ep(t):= \xx[t_0,w_0,u^*,\vvv_\ep](t)$ satisfies
 $D(\ww_\ep(t),\cW(t))\leq \ep$. 
We use the following abbreviation: $\xx_\uuu(t):=\xx[t_0,x_0,\uuu,v^*](t)$. The triangular inequality gives $D(\xx_\uuu(t),\cW(t))\leq \|\xx_\uuu(t)-\ww_\ep(t)
 \|+\ep$. Taking the limit, as $\ep\to 0$, one has that:
\[D^2(\xx_\uuu(t),\cW(t))\leq \lim_{\ep\to0}\|\xx_\uuu(t)-\ww_\ep(t)\|^2.\]
By Lemma \ref{fundamental}, $\|\xx_\uuu(t)-\ww_\ep(t)\|^2\leq (1+(t-t_0)A)\|x_0-w_0\|^2 + B(t-t_0)^2$ for all $\ep>0$. The result follows because $\|x_0-w_0\|=D(x_0,\cW(t_0))$ by definition.  
\end{proof}

\subsubsection{A key Corollary} \label{combi}
Let $x_0\in \cW(t_0)$. 
For any $\uuu\in\U$, define a trajectory $\xx_\uuu$ on $[t_0,t_N]$ inductively: let $\xx_\uuu(t_0)=x_0$ and 
suppose that $\xx_\uuu$ is defined on $[t_0,t_m]$ for some $m=0,\dots,N-1$.  
Let $w_m\in \mathrm{argmin}_{w\in \cW(t_m)}\|\xx_\uuu(t_m)-w\|$ be a closest point to $\xx_\uuu(t_m)$ in $\cW(t_m)$, and let
$v_m^*$ be optimal in the local game $\Ga(t_m,\xx_\uuu(t_m),\xx_\uuu(t_m)-w_m)$.\footnote{We implicitly use two selection rules $\pi_1=\pi_1(\cW)$ and $\pi_2$ defined as follows: $\pi_1:[0,1]\times \RR^n\to \RR^n$ assigns to each $(t,x)$ a closest point to $x$ in $\cW(t)$;  $\pi_2:[0,1]\times \RR^n\times \RR^n\to V$ assigns to each $(t,x,\xi)$ an optimal action in the local game $\Ga(t,x,\xi)$. }
For all $t\in[t_m,t_{m+1}]$, put $\xx_\uuu(t):=\xx[t_m,\xx_\uuu(t_m),\uuu,v_m^*](t)$.
Define a control $\be(\uuu) \in \V$ by setting $\be(\uuu)\equiv v_m^*$ on $[t_m,t_{m+1}]$ for all $0\leq m<N$. 
Clearly, $\xx_\uuu(t)=\xx[t_0,x_0,\uuu,\be(\uuu)](t)$, for all $t\in [t_0,t_N]$.

Note that the action $v_m^*$ used in the interval $[t_m,t_{m+1}]$ depends only on
the current position $\xx_\uuu(t_m)$ and on the set $\cW(t_m)$, and that the former is a deterministic function of $v_0^*,\dots,v_{m-1}^*$ and of the restriction of $\uuu$ to the interval $[t_0,t_m]$. 

Consider $\be$ as a mapping from $\U$ to $\V$. 
Then, for any $\uuu_1, \uuu_2\in \U$ such that 
$\uuu_1\equiv \uuu_2$ on $[t_0,t_m]$ for some $0\leq m<N$,
$\be(\uuu_1)\equiv \be(\uuu_2)$ on $[t_0,t_{m+1}]$. In this sense, $\be:\U\to \V$
is nonanticipative. 

Putting Corollaries \ref{coro1} and \ref{coro2} together, and using that $D(x_0,\cW(t_0))=0$,
we obtain the following result.
\begin{corollaire}\label{coro3} For any $\uuu\in \U$, 
$ D^2(\xx[t_0,x_0,\uuu,\be(\uuu)](t_N),\cW(t_N))\leq e^A B \|\Pi\|.$
\end{corollaire}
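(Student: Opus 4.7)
The plan is to apply Corollary~\ref{coro2} on each subinterval $[t_m,t_{m+1}]$ of the partition $\Pi$, and then chain the resulting one-step bounds exactly as in the inductive argument of Corollary~\ref{coro1}. The construction of $\xx_\uuu$ and of $\be(\uuu)$ is tailor-made for this: on $[t_m,t_{m+1}]$ the trajectory starts at $\xx_\uuu(t_m)$, the point $w_m$ is a closest point to $\xx_\uuu(t_m)$ in $\cW(t_m)$, and $v_m^*$ is optimal in the local game $\Ga(t_m,\xx_\uuu(t_m),\xx_\uuu(t_m)-w_m)$. This is precisely the setup of Corollary~\ref{coro2}, applied with initial time $t_m$ and initial position $\xx_\uuu(t_m)$.

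First I would set $D_m:=D(\xx_\uuu(t_m),\cW(t_m))$ and apply Corollary~\ref{coro2} on $[t_m,t_{m+1}]$ to the control $\uuu$ restricted to that interval, obtaining for each $m=0,\dots,N-1$
\[D_{m+1}^2 \leq \bigl(1+(t_{m+1}-t_m)A\bigr) D_m^2 + B(t_{m+1}-t_m)^2.\]
This is legitimate because the restriction of $\cW$ to $[t_m,1]\times \RR^n$ still satisfies \textbf{P1} and \textbf{P2} (those properties do not depend on the initial time), and because on $[t_m,t_{m+1}]$ one has $\xx_\uuu(t)=\xx[t_m,\xx_\uuu(t_m),\uuu,v_m^*](t)$ by construction.

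Next, I would iterate this one-step recursion exactly as in the proof of Corollary~\ref{coro1} to get
\[D_N^2 \leq \exp\!\left(A\sum_{m=1}^N (t_m-t_{m-1})\right)\left(D_0^2 + B\sum_{m=1}^N (t_m-t_{m-1})^2\right).\]
Since $x_0\in \cW(t_0)$ by hypothesis, $D_0=0$; the standard estimates $\sum_{m=1}^N(t_m-t_{m-1})\leq 1$ and $\sum_{m=1}^N(t_m-t_{m-1})^2 \leq \|\Pi\|$ then yield $D_N^2 \leq e^A B\|\Pi\|$. Finally, the identity $\xx_\uuu(t_N)=\xx[t_0,x_0,\uuu,\be(\uuu)](t_N)$, which holds by the very definition of $\be(\uuu)$, rewrites this as the desired bound.

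The argument is essentially bookkeeping: the genuine estimates are already packaged into Corollaries~\ref{coro1} and~\ref{coro2}, and the only point requiring care is to verify that at every node $t_m$ the data $(\xx_\uuu(t_m),w_m,v_m^*)$ fulfil the input hypotheses of Corollary~\ref{coro2} on the next subinterval. Once this is checked, no further analytic obstacle arises.
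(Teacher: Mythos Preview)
Your proof is correct and follows exactly the approach indicated in the paper: the paper's own argument is simply the one-line observation that Corollary~\ref{coro3} follows by ``putting Corollaries~\ref{coro1} and~\ref{coro2} together, and using that $D(x_0,\cW(t_0))=0$''. Your write-up just makes this explicit by applying Corollary~\ref{coro2} on each subinterval and then chaining the resulting one-step recursion as in Corollary~\ref{coro1}.
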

This result can be interpreted as follows: suppose the state belongs to $\cW$ at time $t_0$; 
then for any control $\uuu\in \U$ (say, of player $1$), there
exists a piece-wise, nonanticipative reply of player $2$ such that the distance to $\cW$ at some terminal time vanishes as $\|\Pi\|$ tends to $0$.
In terms of strategies, which will be
defined in the next Section, the result implies that player $2$ has a strategy which keeps the state very close to the set $\cW$.
This property is used in Section \ref{jd} to prove the existence of the value in zero-sum differential game.
The epigraph of the lower value function will play the role of the set $\cW$.


\section{Differential Games}\label{jd}
For any $(t_0,x_0)\in [0,1]\times \RR^n$, consider now the zero-sum differential game played in $[t_0,1]$
with the following dynamics: 
\[\xx(t_0)=x_0, \ \text{ and } \ \dot{\xx}(t)=f(t,\xx(t),\uuu(t),\vvv(t)), \text{ a.e. on } [t_0,1].\]
\begin{definition} A strategy for player $2$ is a map $\be:\U\to \V$ such that,
for some finite partition $s_0<s_1<\cdots < s_N$ of $[t_0,1]$, for all $\uuu_1,\uuu_2 \in\U$ and $0\leq m<N$:
\[\uuu_1\equiv \uuu_2 \text{ a.e. on } [s_0,s_m] \ \Longrightarrow \ \be(\uuu_1)\equiv \be(\uuu_2)\text{ a.e. on }[s_0,s_{m+1}].\]
\end{definition} 
\noindent These strategies are called nonanticipative strategies with delay (NAD) in \cite{CQ08}, in contrast to the classical nonanticipative strategies.
The strategies for player $1$ are defined in a dual manner. Let $\A$ (resp.  $\B$) the set of
strategies for Player $1$ (resp. $2$). For any pair of strategies $(\al,\beta)\in \A\times \B$, there exists a unique pair $(\bar{\uuu},\bar{\vvv})\in \U\times \V$ such that $\al(\bar{\vvv})=\bar{\uuu}$,  and $\be(\bar{\uuu})=\bar{\vvv}$ (see \cite{CQ08}).
This fact is crucial for it allows to define $\xx[t_0,x_0,\al,\be]:=\xx[t_0,x_0,\bar{\uuu},\bar{\vvv}]$ in a unique manner.


The payoff in a differential game has generally two components: a running payoff and a terminal payoff, represented by the functions $\ga:[0,1]\times \RR^n\times U\times V\to \RR$ and $g:\RR^n\to \RR$ respectively.
However, the classical transformation of a Bolza problem into a Mayer problem, which gets rid of the running payoff, can also be applied here: enlarge the state space from $\RR^n$ to $\RR^{n+1}$, where the last coordinate represents the cumulated payoff; 
define an auxiliary terminal payoff function $\widetilde{g}:\RR^{n+1}\to \RR$ as $\widetilde{g}(x,y)=g(x)+y$; we thus obtain an equivalent 
differential game with no running payoff. W.l.o.g. we assume from now on that $\ga\equiv 0$.
\textbf{Assumption 3:} $g$ is Lipschitz continuous.\\
We suppose that the Assumptions $1$, $2$ and $3$ hold in the rest of the paper. 
The differential game with initial time $t_0$, initial state $x_0$ and terminal payoff $g$ is denoted by $\mathcal{G}(t_0,x_0)$.\\
Introduce the lower and upper value functions: 
\begin{eqnarray*}
 V^-(t_0,x_0)&:=&\adjustlimits \sup_{\al \in \A}\inf_{\be\in\B} g(\xx[t_0,x_0,\al,\be](1)),\\ 
 V^+(t_0,x_0)&:=&\adjustlimits \inf_{\beta \in \B}\sup_{\al\in \A} g(\xx[t_0,x_0,\al,\be](1)).  
\end{eqnarray*}
The inequality $V^-\leq V^+$ holds everywhere. 
If $V^-(t_0,x_0)= V^+(t_0,x_0)$, the game $\mathcal{G}(t_0,x_0)$ has a value. 
Notice that its lower and upper Hamiltonian of are precisely the maxmin and the minmax of the local games defined in Section \ref{dir}.
Consequently, {Assumption $2$} is precisely \defn{Isaacs' condition}. \\ 
The lower value function satisfies the following super-dynamic programming principle (see \cite{CQ08}).

For all $(t,x)\in [t_0,1]\times \RR^n$ and all $t'\in[t_0,1]$:
\begin{equation}\label{sdpp}V^-(t,x)\geq\adjustlimits \sup_{\al\in \A} \inf_{\beta\in \B}V^-(t',\xx[t_0,x_0,\al,\beta](t').
\end{equation}

A proof of a slightly weaker version of \eqref{sdpp} can be found in the Appendix.
\subsection{Existence and characterization of the value}\label{ex}
Let $\phi:[t_0,1]\times \RR^n \to \RR$ be a real function satisfying the following properties:
\begin{itemize}
\item [$(i)$] $x\mapsto \phi(t,x)$ is lower
semicontinuous, for all $t\in [t_0,1]$;
\item[$(ii)$]  
For all $(t,x)\in [t_0,1]\times \RR^n$ and $t'\in [t,1]$: $$\phi(t,x)\geq \adjustlimits \sup_{u\in U}\inf_{\vvv\in
\V}\phi\big(t',\xx[t,x,u,\vvv](t')\big);$$
\item[$(iii)$] $\phi(1,x)\geq g(x)$,
 for all $x\in \RR^n$.
 \end{itemize}
\begin{definition} For any $\ell\in \RR$, define the \defn{$\ell$-level set of $\phi$} by: 
\begin{eqnarray*}
\cW^\phi_\ell&=&\{(t,x) \in [t_0,1]\times \RR^n\ | \ \phi(t,x)\leq
\ell\},
\end{eqnarray*}
\end{definition}
\begin{lemme}\label{stable} For any $\ell \geq \phi(t_0,x_0)$, the $\ell$-level set of $\phi$ satisfies \emph{\textbf{P1}} and 
\emph{\textbf{P2}}.
\end{lemme}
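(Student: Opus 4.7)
The plan is to derive both \textbf{P1} and \textbf{P2} in one sweep from lower semicontinuity of $\phi$ combined with boundedness of trajectories, handling the two halves of \textbf{P1} separately: closedness is automatic from $(i)$, and nonemptiness will fall out of the same compactness argument used for \textbf{P2}. Concretely, for each $t\in[t_0,1]$ the section
$$\cW^\phi_\ell(t)=\{x\in\RR^n : \phi(t,x)\leq\ell\}$$
is the sublevel set of the lower semicontinuous function $x\mapsto\phi(t,x)$ guaranteed by $(i)$, hence closed; this takes care of the closedness half of \textbf{P1}.

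For the rest, I would fix $(t,x)\in\cW^\phi_\ell$, $t'\in[t,1]$, and an arbitrary $u\in U$, and apply property $(ii)$ to obtain
$$\inf_{\vvv\in\V}\phi\bigl(t',\xx[t,x,u,\vvv](t')\bigr)\leq\phi(t,x)\leq\ell.$$
Then I would pick an infimizing sequence $\vvv_n\in\V$, so that $\phi(t',y_n)\to a$ for some $a\leq\ell$, with $y_n:=\xx[t,x,u,\vvv_n](t')$. By Assumption~1.a the points $y_n$ all lie in the closed ball of radius $\|f\|(t'-t)$ about $x$, so a subsequence converges to some $y_\infty\in\RR^n$. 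Lower semicontinuity gives $\phi(t',y_\infty)\leq\liminf_n\phi(t',y_n)\leq\ell$, placing $y_\infty$ in $\cW^\phi_\ell(t')$. This simultaneously shows that $\cW^\phi_\ell(t')$ is nonempty (apply this to $(t_0,x_0)$, which lies in $\cW^\phi_\ell(t_0)$ by hypothesis, to produce a point of $\cW^\phi_\ell(t')$ for every $t'\in[t_0,1]$) and that $\inf_{\vvv\in\V}D(\xx[t,x,u,\vvv](t'),\cW^\phi_\ell(t'))=0$. Taking the supremum over $u\in U$ concludes \textbf{P2}.

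There is no real difficulty here: properties $(i)$ and $(ii)$ are tailor-made for exactly this extraction. The one point I would pause on is that the limit curve obtained from the selected subsequence need not itself be a trajectory of the system --- relaxed weak-$*$ limits of the $\vvv_n$ may escape $\V$ --- but this is irrelevant, since the argument uses only the limiting \emph{endpoint} $y_\infty$, whose membership in $\cW^\phi_\ell(t')$ follows from the scalar lower semicontinuity of $\phi(t',\cdot)$ alone, not from any dynamical property of a limit curve.
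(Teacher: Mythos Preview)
Your proof is correct and follows essentially the same route as the paper's: closedness of the sections from lower semicontinuity $(i)$, then for \textbf{P2} and nonemptiness you pick an infimizing sequence from $(ii)$, use boundedness of $f$ to extract a convergent subsequence of endpoints, and apply $(i)$ again to place the limit in $\cW^\phi_\ell(t')$. The only cosmetic difference is that the paper first runs the argument from $(t_0,x_0)$ and then remarks that it generalizes, whereas you work directly with an arbitrary $(t,x)\in\cW^\phi_\ell$; your closing observation that no limit \emph{trajectory} is needed is a nice clarifying point not made explicit in the paper.
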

\begin{proof} $x_0\in \cW^\phi_\ell(t_0)$ so that $\cW^\phi_\ell(t_0)$ is nonempty. By $(i)$,
$\cW^\phi_\ell(t)$ is a closed set for all $t\in[0,1]$. The property $(ii)$ implies that
for any $t\in [t_0,1]$, $u\in U$ and $n\in \NN^*$ there exists $\vvv_n\in \V$ such that:
\begin{equation}\label{rd}\ell\geq \phi(t_0,x_0)\geq \phi\big(t,\xx[t_0,x_0,u,\vvv_n](t))-\frac{1}{n}.\end{equation}
The boundedness of $f$ implies that $x_n:=\xx[t_0,x_0,u,\vvv_n](t)$ belongs to some compact set.
Consider a subsequence $(x_n)_n$ such that $\limn \phi(t,x_n)=\liminf_{ n \to \infty}\phi(t,x_n)$,
and such that $(x_n)_n$ converges to some $\bar{x}\in \RR^n$. 
Take the limit, as $n \to \infty$, in \eqref{rd}. Using $(i)$ again, we obtain:
\begin{equation*}
\ell\geq \phi(t_0,x_0)\geq \phi\big(t,\bar{x}).\end{equation*}
Consequently, $\bar{x}\in \cW_\ell^\phi(t)\neq \emptyset$ and 
$\inf_{n\in \NN^*} d\big(\xx[t_0,x_0,u,\vvv_n](t),\cW^\phi_\ell\big(t))=0$. The proof of these two properties 
still holds by replacing the initial data (i.e. $(t_0,x_0)$ and $t\in [t_0,1]$) by some $(t,x)\in \cW_\ell^\phi$ and $t'\in[t,1]$.
Thus, $\cW_\ell^\phi$ satisfies \textbf{P1} and \textbf{P2}. 

\end{proof}
\subsubsection{Extremal strategies in $\mathcal{G}(t_0,x_0)$}  
Let $\cW^\phi\subset[t_0,1]\times \RR^n$ be the $\phi(t_0,x_0)$-level set of $\phi$.
Let $\pi_1=\pi_1(\cW^\phi)$ and $\pi_2$ be two selection rules defined as follows: $\pi_1:[0,1]\times \RR^n\to \RR^n$ assigns to each $(t,x)$ a closest point to $x$ in $\cW^\phi(t)$;  $\pi_2:[0,1]\times \RR^n\times \RR^n\to V$ assigns to each $(t,x,\xi)$ an optimal action in the local game $\Ga(t,x,\xi)$. Finally, let:
\[ \pi:[0,1]\times \RR^n \to  V,\quad (t,x)\mapsto \pi_2(t,x,x-\pi_1(t,x)).\]
\begin{definition} An \defn{extremal strategy} $\be=\be(\phi,\Pi,\pi):\U\to \V$ is defined inductively as follows:
suppose that $\be$ is already defined in $[t_0,t_m]$ for some $0\leq m <N$, and let $x_m=\xx[t_0,x_0,\uuu,\be](t_m)$. Set 
 $\be(\uuu)\equiv \pi(t_m,x_m)$ on $[t_m,t_{m+1}]$.
\end{definition}
These strategies are inspired by the \emph{extremal aiming} method of Krasovskii and Subbotin (see Section 2.4 in \cite{KS87}).

\begin{proposition}\label{cc} 
For some $C\geq 0$, and for any extremal strategy $\be=\be(\phi,\Pi,\pi)$: 
\[g(\xx[t_0,x_0,\uuu,\be(\uuu)](1))\leq \phi(t_0,x_0)+ C \sqrt{\|\Pi\|}, \quad \forall \uuu\in\U.\]
\end{proposition}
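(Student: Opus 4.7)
The plan is to reduce the proposition to an application of Corollary \ref{coro3} to the $\phi(t_0,x_0)$-level set $\cW^\phi$, and then convert a distance bound into a payoff bound using the Lipschitz continuity of $g$.

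First I would identify the extremal strategy $\be(\phi,\Pi,\pi)$ defined before the proposition with the nonanticipative reply constructed in Section \ref{combi} for the set $\cW=\cW^\phi$ and the partition $\Pi$. By the very definition of $\pi=\pi_2(\cdot,\cdot,\cdot-\pi_1(\cdot,\cdot))$, on each interval $[t_m,t_{m+1}]$ the strategy plays an optimal action of $\Ga(t_m,x_m,x_m-w_m)$, where $w_m\in\mathrm{argmin}_{w\in\cW^\phi(t_m)}\|x_m-w\|$ and $x_m$ is the current position; this is exactly the construction that yielded $\be$ in Corollary \ref{coro3}. Note that $x_0\in\cW^\phi(t_0)$ since $\phi(t_0,x_0)\leq\phi(t_0,x_0)$, so the hypothesis of that corollary is satisfied. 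By Lemma \ref{stable}, $\cW^\phi$ enjoys properties \textbf{P1} and \textbf{P2}, so Corollary \ref{coro3} applies and gives, assuming $t_N=1$,
\[
D^2\bigl(\xx[t_0,x_0,\uuu,\be(\uuu)](1),\cW^\phi(1)\bigr)\leq e^A B\,\|\Pi\|.
\]

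Next I would pass from a distance estimate to a payoff estimate. Write $x(1):=\xx[t_0,x_0,\uuu,\be(\uuu)](1)$. Since $\cW^\phi(1)$ is closed (property \textbf{P1}) and nonempty, the infimum is achieved at some $\bar x\in\cW^\phi(1)$ with $\|x(1)-\bar x\|\leq\sqrt{e^A B\,\|\Pi\|}$. By definition of the level set, $\phi(1,\bar x)\leq\phi(t_0,x_0)$, and by property $(iii)$ of $\phi$ we have $g(\bar x)\leq\phi(1,\bar x)$. Combining,
\[
g(\bar x)\leq \phi(t_0,x_0).
\]
Finally, let $L$ be a Lipschitz constant for $g$ (Assumption 3). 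Then
\[
g(x(1))\leq g(\bar x)+L\|x(1)-\bar x\|\leq \phi(t_0,x_0)+L\sqrt{e^A B}\,\sqrt{\|\Pi\|},
\]
which yields the claim with $C:=L\sqrt{e^A B}$, independently of $\uuu$ and of the particular selection rules used to build $\pi$.

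I do not foresee a real obstacle here; the whole proposition is essentially a bookkeeping exercise once Corollary \ref{coro3} and Lemma \ref{stable} are available. The only point that deserves a line of justification is that the construction of Section \ref{combi} coincides with the extremal strategy $\be(\phi,\Pi,\pi)$, so that Corollary \ref{coro3} can indeed be invoked; this matches by inspection of the two definitions.
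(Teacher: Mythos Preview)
Your proof is correct and follows essentially the same route as the paper: invoke Lemma~\ref{stable} to get \textbf{P1}--\textbf{P2} for $\cW^\phi$, apply Corollary~\ref{coro3} to bound the distance from the terminal state to $\cW^\phi(1)$, then use property~$(iii)$ together with the Lipschitz constant of $g$ to convert this into a payoff bound, obtaining $C=\kappa\sqrt{e^A B}$. Your additional remarks (that $x_0\in\cW^\phi(t_0)$ and that the extremal strategy coincides with the construction of Section~\ref{combi}) make explicit what the paper leaves implicit.
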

\begin{proof} Recall that  $x_N=\xx[t_0,x_0,\uuu,\be(\uuu)](1)$. By Lemma \ref{stable}, 
$\cW^\phi$ satsifies \textbf{P1} and \textbf{P2}.
Thus, by Corollary \ref{coro3}:
\begin{equation}\label{est}D^2(x_N,\cW^\phi(t_N)) \leq e^{A}B\|\Pi\|.  
\end{equation}
%
Using $(iii)$ and the fact that $t_N=1$ yields:
$$\cW^\phi(t_N)=\{x\in \RR^n | \ \phi(1,x)\leq \phi(t_0,x_0)\}\subset \{x\in \RR^n | \ g(x)\leq \phi(t_0,x_0)\}.$$
Let $w_N\in \mathrm{argmin}_{w\in \cW^\phi(1)}\|x_N-w\|$ be some closest point to $x_N$ in $\cW^\phi(1)$. 
Let $\kappa$ be the Lipschitz constant of $g$ (assumption $3$). Then:
 \begin{eqnarray*}
 g(x_N)&\leq& g(w_N)+\kappa \|x_N-w_N\|,\\&\leq & \phi(t_0,x_0)+ \kappa d(x_N,\cW^\phi(t_N)).
 \end{eqnarray*}
The result follows from \eqref{est}. 
\end{proof}
\noindent Proposition \ref{cc} applies to any function satisfying $(i)$, $(ii)$ and $(iii)$. Consequently, under Assumptions $1$, $2$ and $3$: 
\begin{equation}\label{v+}V^+(t_0,x_0)\leq \inf\{\phi(t_0,x_0)\ | \ \phi:[t_0,1]\times \RR^n\to \RR \text{ satisfying }(i),(ii),(iii)\}.
\end{equation}
Consequently, the value exists if the lower value function $V^-$ satisfies $(i)-(iii)$.
\begin{theoreme}\label{main} The differential game $\mathcal{G}(t_0,x_0)$ has a value, characterized as: 
\begin{equation*}
\VV(t_0,x_0)=\inf_{\substack{\phi \text{ satisfying }\\(i),(ii),(iii)}}\phi(t_0,x_0).
\end{equation*}
The strategies $\be(\VV,\Pi)$ are asymptotically optimal for player $2$, as $\|\Pi\|\to 0$. 
\end{theoreme}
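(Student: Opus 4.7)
The plan is to show that the lower value function $V^-$ itself satisfies properties $(i)$, $(ii)$ and $(iii)$. Once this is done, combining \eqref{v+} (which gives $V^+\le V^-$) with the trivial inequality $V^-\le V^+$ yields the existence of the value $\VV$, and the variational characterization follows from a sandwich argument. The asymptotic optimality of $\be(\VV,\Pi,\pi)$ is then a direct application of Proposition \ref{cc} to $\phi=\VV$.

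Property $(iii)$ holds trivially since no dynamics remain at $t=1$: $V^-(1,x)=g(x)$. Property $(ii)$ is deduced from the super-dynamic programming principle \eqref{sdpp} by restricting to a suitable family of constant strategies. Precisely, any $u\in U$ lifts to a strategy $\al\in\A$ which ignores its input; for such $\al$ the unique fixed point against an arbitrary $\be\in\B$ is $(\bar{\uuu},\bar{\vvv})=(u,\be(u))$, and $\be(u)$ ranges over all of $\V$ as $\be$ varies in $\B$ (take $\be$ constant equal to $\vvv$). Consequently the right-hand side of \eqref{sdpp} dominates the right-hand side of $(ii)$.

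The key analytic step, and the main obstacle, is property $(i)$: the lower semicontinuity of $V^-(t,\cdot)$. I would establish Lipschitz continuity (which is stronger): for any $x_1,x_2\in\RR^n$ and any $(\uuu,\vvv)\in\U\times\V$, the Lipschitz-in-$x$ assumption on $f$ together with Gronwall's lemma give
\[\|\xx[t,x_1,\uuu,\vvv](1)-\xx[t,x_2,\uuu,\vvv](1)\|\le e^{c}\|x_1-x_2\|.\]
The delicate point in the strategy formulation is that the fixed point $(\bar{\uuu},\bar{\vvv})$ against a pair $(\al,\be)$ depends on the initial position, so this control-level bound has to be transferred to strategies by exploiting the NAD structure, propagating the estimate inductively across the delay intervals. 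Combined with the Lipschitz property of $g$ (Assumption 3), this yields $|V^-(t,x_1)-V^-(t,x_2)|\le\kappa e^c\|x_1-x_2\|$, hence $(i)$.

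Once $(i)$--$(iii)$ are verified, \eqref{v+} gives $V^+(t_0,x_0)\le V^-(t_0,x_0)$ and the trivial inequality $V^-\le V^+$ yields the common value $\VV$. For the characterization, observe that $V^-$ itself is a valid $\phi$, so $\inf_\phi\phi(t_0,x_0)\le V^-(t_0,x_0)=\VV(t_0,x_0)$; combined with $\VV(t_0,x_0)=V^+(t_0,x_0)\le\inf_\phi\phi(t_0,x_0)$ from \eqref{v+}, this gives the desired equality. Finally, Proposition \ref{cc} applied to $\phi=\VV$ yields, for every $\uuu\in\U$,
\[g(\xx[t_0,x_0,\uuu,\be(\uuu)](1))\le\VV(t_0,x_0)+C\sqrt{\|\Pi\|};\]
using the fixed-point identification $\xx[t_0,x_0,\al,\be]=\xx[t_0,x_0,\bar{\uuu},\bar{\vvv}]$, taking the supremum over $\al\in\A$ is the same as taking the supremum over $\uuu\in\U$, so the bound above controls $\sup_{\al\in\A}g(\xx[t_0,x_0,\al,\be(\uuu)](1))$ and tends to $\VV(t_0,x_0)$ as $\|\Pi\|\to 0$, which is asymptotic optimality.
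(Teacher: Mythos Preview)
Your proposal is correct and follows the same route as the paper: verify that $V^-$ satisfies $(i)$--$(iii)$, invoke \eqref{v+} to get $V^+\le V^-$, and apply Proposition~\ref{cc} with $\phi=\VV$ for asymptotic optimality; the sandwich argument for the variational formula is exactly what the paper leaves implicit.

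One remark: the ``delicate point'' you raise for property $(i)$ is not actually present. In this paper strategies are maps $\al:\V\to\U$ and $\be:\U\to\V$ that do \emph{not} see the state, so the fixed point $(\bar\uuu,\bar\vvv)$ determined by $\al(\bar\vvv)=\bar\uuu$, $\be(\bar\uuu)=\bar\vvv$ is independent of the initial position $x_0$. Hence the Gronwall estimate at the control level,
\[
\|\xx[t,x_1,\uuu,\vvv](1)-\xx[t,x_2,\uuu,\vvv](1)\|\le e^{c}\|x_1-x_2\|,
\]
transfers to $\xx[t,\cdot,\al,\be]$ immediately, with no induction over delay intervals needed. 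The paper's Appendix proceeds exactly this way.
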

\begin{proof}
By definition, $V^-(1,x)=g(x)$, for all $x\in \RR^n$, so that $(iii)$ is satified. The property $(ii)$ 
can be deduced directly from 
\eqref{sdpp}; 
Assumption $1$ and $3$ imply that the map $x\mapsto V^-(t,x)$ is Lipschitz continuous for all $t\in[t_0,1]$; in particular, $(i)$ is satisfied.
These properties being classical, we have preferred to give the details in the Appendix. 
Finally, let $\be=\be(\VV,\Pi,\pi)$ be an extremal strategy. 
Proposition \ref{cc} gives:
$$V^+(t_0,x_0)\leq \sup_{\uuu\in \U} g\big(\xx[t_0,x_0,\uuu,\be(\uuu)](1)\big)\leq V^-(t_0,x_0)+ C\sqrt{\|\Pi\|}.$$ 
The existence of the value is obtained by letting $\|\Pi\|$  tend to $0$. 
Moreover, note that for any $\ep>0$, $\be$ is $\ep$-optimal for sufficiently small $\|\Pi\|$. 
\end{proof}

\section{Appendix} 
Note that the classical subdynamic programming principle \eqref{sdpp} implies $(ii)$.
Indeed, any $u\in U$ can be identified with a strategy that plays $u$ on $[t_0,1]$ regardless of $\vvv$. Then:
 \begin{eqnarray*}
 \adjustlimits \sup_{\al\in \A} \inf_{\beta\in \B}
 V^-(t',\xx[t_0,x_0,\al,\beta])(t') & \geq &\adjustlimits \sup_{u\in U} \inf_{\beta\in \B} V^-(t',\xx[t_0,x_0,u,\beta(u)])(t')  \\
 &=& \sup_{u\in U}\inf_{\vvv \in \V} V^-(t',\xx[t_0,x_0,u,\vvv])(t'). 
 \end{eqnarray*} 
The proofs of \eqref{sdpp} and $(ii)$ are essentially the same. We provide here a proof of the latter 
because it is this version that we have used in the proof of Theorem \ref{main}. 
\begin{claim} $V^-$ satisfies $(i)$ and $(ii)$.
\end{claim}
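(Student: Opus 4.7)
The plan is to handle $(i)$ and $(ii)$ separately, using Assumptions 1 and 3 and the uniqueness-of-fixed-point property for NAD strategies from \cite{CQ08}.

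For $(i)$, I would first prove the stronger statement that $x \mapsto V^-(t, x)$ is Lipschitz (which trivially implies lower semicontinuity). The key point is that for any pair $(\al, \be) \in \A \times \B$ the induced fixed-point controls $(\bar\uuu, \bar\vvv)$ depend only on $(\al, \be)$ and not on the initial state; hence $\xx[t,x,\al,\be]$ and $\xx[t,y,\al,\be]$ are driven by the same pair of controls. Gronwall's inequality applied to the $c$-Lipschitz dynamics (Assumption 1.b) gives $\|\xx[t,x,\al,\be](1) - \xx[t,y,\al,\be](1)\| \leq e^{c}\|x-y\|$, and together with Assumption 3 this yields $|g(\xx[t,x,\al,\be](1)) - g(\xx[t,y,\al,\be](1))| \leq \kappa e^{c}\|x-y\|$ uniformly in $(\al,\be)$. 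This bound passes to the $\sup_\al \inf_\be$.

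For $(ii)$, fix $(t,x)$, $t' \in [t,1]$, $u \in U$ and $\ep > 0$. The plan is to build $\tilde\al \in \A$ for the $[t,1]$-game such that $V^-(t,x) \geq V^-(t', \xx[t,x,u,\vvv](t')) - \ep$ for every $\vvv \in \V$. With $L := \kappa e^c$ the Lipschitz constant from $(i)$, pick $\rho > 0$ with $3L\rho < \ep$, fix a countable Borel partition $(B_i)$ of $\RR^n$ with diameters less than $\rho$, and select representatives $y_i \in B_i$. For each $i$, by definition of $V^-(t', y_i)$, pick $\al_i$ near-optimal in the subgame started at $(t', y_i)$: $\inf_{\be'} g(\xx[t', y_i, \al_i, \be'](1)) \geq V^-(t', y_i) - \ep/3$. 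Define $\tilde\al$ to play $u$ constantly on $[t,t']$ and, on $[t',1]$, to play $\al_{i(\vvv)}$, where $i(\vvv)$ is the index with $\xx[t,x,u,\vvv](t') \in B_{i(\vvv)}$. Choosing a delay partition that contains $t'$ and refines those of all the $\al_i$, one checks $\tilde\al \in \A$.

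For any $\be \in \B$, let $(\bar\uuu, \bar\vvv)$ be the fixed-point pair and $y(\be) := \xx[t,x,u,\bar\vvv](t')$. The trajectory on $[t',1]$ coincides with $\xx[t', y(\be), \al_{i(\be)}, \be']$ for a suitably restricted $\be' \in \B$, so combining the defining inequality of $\al_{i(\be)}$ with the Lipschitz estimates (to move the base point from $y_{i(\be)}$ to $y(\be)$) yields
\[
g(\xx[t,x,\tilde\al,\be](1)) \geq V^-(t', y_{i(\be)}) - \ep/3 - L\rho \geq V^-(t', y(\be)) - \ep/3 - 2L\rho \geq V^-(t', y(\be)) - \ep.
\]
Taking $\inf_\be$ and observing that $\{y(\be) : \be \in \B\} = \{\xx[t,x,u,\vvv](t') : \vvv \in \V\}$ (since the constant strategy $\be(\uuu) \equiv \vvv$ produces $\bar\vvv = \vvv$), one obtains $V^-(t,x) \geq \inf_\vvv V^-(t', \xx[t,x,u,\vvv](t')) - \ep$. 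Letting $\ep \to 0$ and taking $\sup_u$ concludes. I expect the main obstacle to be not conceptual but bookkeeping: checking that the glued $\tilde\al$ is genuinely NAD and that the trajectory decomposes cleanly at $t'$ into a subgame started at $y(\be)$ requires aligning the delay partitions of $\be$ and of the various $\al_i$ and carefully defining the continuation $\be'$, which is legitimate precisely because $\bar\uuu \equiv u$ on $[t,t']$ freezes $\be$'s dependence on that history.
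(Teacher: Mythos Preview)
Your argument follows the paper's proof almost exactly: Gronwall plus Assumption~3 for $(i)$, and for $(ii)$ a concatenated strategy that plays the constant $u$ on $[t,t']$ and then switches to a near-optimal strategy $\al_i$ chosen according to the cell of a spatial partition in which $\xx[t,x,u,\vvv](t')$ lands. The paper's bookkeeping is slightly lighter because it works directly with $\vvv\in\V$ rather than passing through the fixed-point pair $(\bar\uuu,\bar\vvv)$, but this is cosmetic.

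There is one genuine slip. You take a \emph{countable} Borel partition $(B_i)$ of $\RR^n$ and then assert that one can choose a finite delay partition of $[t,1]$ refining those of \emph{all} the $\al_i$. With infinitely many $\al_i$, each carrying its own finite partition of $[t',1]$, a common refinement need not be finite, and then $\tilde\al$ fails to be an NAD strategy in the sense of the paper. The paper avoids this by first using the boundedness of $f$ (Assumption~1.a): the reachable set $\{\xx[t,x,u,\vvv](t'):\vvv\in\V\}$ is contained in the compact ball $\overline{B}(x,\|f\|)$, so a \emph{finite} family $(E_i)_{i\in I}$ of disjoint sets of small diameter suffices to cover it, and then a common finite refinement of the finitely many partitions of $(\al_i)_{i\in I}$ exists. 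Your argument is repaired the same way: observe that only finitely many of your $B_i$ meet $\overline{B}(x,\|f\|)$, discard the rest (or assign them a single default strategy), and the NAD property of $\tilde\al$ follows.
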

\begin{proof}
$(i)$: Using Assumption $1$ and Gronwall's lemma one obtains that, for all $t\in[t_0,1]$, $(\uuu,\vvv)\in \U\times \V$, and $ x,y\in \RR^n$:
 $$\left\|\xx[t_0,x,\uuu,\vvv](t)\big)-\xx[t_0,y,\uuu,\vvv](t)\big)\right|\leq e^{c(t-t_0)}\|x-y\|.$$  
 Thus, by Assumption $3$, for all $(\uuu,\vvv)\in \U\times \V$, and for all  $x,y\in \RR^n$:  
 $$\left|g\big(\xx[t_0,x,\uuu,\vvv](1)\big)-g\big(\xx[t_0,y,\uuu,\vvv](1)\big)\right|\leq \kappa e^{c(1-t_0)}\|x-y\|.$$ 
 Consequently, the map $x\mapsto V^-(t,x)$ is $\kappa e^c$-Lipschitz continuous for all $t\in[t_0,1]$. \\
$(ii)$: Let $(t,x)\in [t_0,1]\times \RR^n$, $t'\in [t,1]$ and $\ep>0$ be fixed.
The Lipschitz continuity of $z\mapsto V^-(t',z)$ implies the existence of some $\de>0$ such that any $\ep$-optimal action in $\mathcal{G}(t',x')$
is $2\ep$-optimal in $\mathcal{G}(t',z)$,
for all $z\in B(x',\de)$, which is the euclidean ball of radius $\de$ centered in $x'$. 
By compactness, let $B(x,\|f\|)$ be covered by 
some finite family $(E_i)_{i\in I}$ of pairwise disjoint sets, each one included in a ball $B(x_i,\de)$ for some $(x_i)_i \in (\RR^n)^I$. 
Let $\al_i\in \A$ be an $\ep$-optimal strategy for player $1$ in $V^-(t',x_i)$, and use the notation $\xx_\vvv(t'):=\xx[x,t,u,\vvv](t')$.
Then, by definition, for all $\vvv\in V$:
$$g(\xx[t',\xx_\vvv(t'),\al_i,\vvv](1)) \ind_{\{\xx_\vvv(t') \in E_i\}} \geq V^-(t',\xx(t'))\ind_{\{\xx_\vvv(t') \in E_i\}}-2\ep.$$ 
For each $u\in U$, 
define a strategy $\al_u \in \A$ as follows: $\forall t\in [t_0,1]$, $\forall \vvv\in \V$,
 \begin{equation*}
\al_u(\vvv)(t)=
\begin{cases} u & \text{if } t\in [t,t'),\\
\al_i(\vvv)(t) &  \text{if } t\in [t',1], \ \text{ and} \quad \xx_\vvv(t')\in E_i.        
          \end{cases}
 \end{equation*}
Note that $\al_u$ is a NAD strategy in $\mathcal{G}(t,x)$. Indeed, let $s_1<\dots<s_N$ be a common partition of $[t',1]$ for the strategies 
$(\al_i)_i$ -- this is possible because the family is finite. Thus, $\al_u$ is defined with respect to $t<t'<s_2<\dots<s_N$.
Now, for all $\vvv\in \V$:
\begin{eqnarray*}
g(\xx[x,t,\al_u,\vvv](1))&=&\sum\nolimits_{i\in I} g(\xx[t',\xx_\vvv(t'),\al_i,\vvv](1)) \ind_{\{\xx_\vvv(t')\in E_i\}},\\
&\geq& \sum\nolimits_{i\in I} V^-(t',\xx_\vvv(t'))\ind_{\{\xx_\vvv(t')\in E_i\}}-2\ep,\\
&=&V^-(t',\xx_\vvv(t'))-2\ep,
\end{eqnarray*}
Taking the infimum in $\V$, and the supremum in $U$ yields the desired result.

\end{proof}


\begin{thebibliography}{99}

\bibitem{CQ08}
{\sc Cardaliaguet, P. ; Quincampoix, M.} (2008) \textit{Deterministic differential games under probability knowledge of initial condition,} \newblock{International Game Theory Rev.} \textbf{10}, 1--16.

\bibitem{aubin89victory}
{\sc Aubin, J.P.} (1989) \textit{Victory and defeat in differential games}, {Lecture Notes in Control and Inform. Sci.} \textbf{121}, 337--347, Springer, Berlin. 

\bibitem{KS87}
{\sc  Krasovski{\u\i}, N. N. ; Subbotin, A. I.} (1988) \textit{Game-theoretical control problems}, {Springer-Verlag}.

\end{thebibliography}
\end{document}